\documentclass[10pt,a4paper]{amsart}

\usepackage[margin=22mm]{geometry}
\usepackage[T1]{fontenc}
\usepackage{lmodern}
\usepackage{amsmath,amssymb,mathtools}
\usepackage{microtype}
\usepackage[hidelinks]{hyperref}
\hypersetup{
  pdftitle={A negative answer to the Erdos-Sarkozy question},
  pdfauthor={Simone Costa},
  pdfsubject={Subset sums without three-term arithmetic progressions},
  pdfkeywords={subset sums, arithmetic progressions, Erdos-Sarkozy question, subset-sum-distinct sets}
}

\newtheorem{theorem}{Theorem}[section]
\newtheorem{proposition}[theorem]{Proposition}
\newtheorem{corollary}[theorem]{Corollary}
\theoremstyle{remark}

\newcommand{\Z}{\mathbb Z}
\newcommand{\ee}{\varepsilon}

\title{A negative answer to the Erd\H{o}s--S\'ark\H{o}zy question}
\author{Simone Costa}
\address{DICATAM, Universit\`a degli Studi di Brescia, Via Branze 43, 25123 Brescia, Italy}
\email{simone.costa@unibs.it}
\date{}

\begin{document}

\begin{abstract}
For a finite set $A$ of positive integers, let $H(A)$ be its set of subset sums, and let $g_3(n)$ be the least $N$ for which some $n$-element set $A\subseteq\{1,\ldots,N\}$ has $H(A)$ free of nonconstant three-term arithmetic progressions. Erd\H{o}s and S\'ark\H{o}zy asked whether $g_3(n)\gg 3^n$. We prove
\[
  \liminf_{n\to\infty}\frac{g_3(n)}{3^n}=0.
\]
More precisely, for every $\ee>0$ there is an integer $d\ge2$ such that
$g_3(d\ell)\le \ee 3^{d\ell}$ for all sufficiently large $\ell$.
The proof uses Korsky's characterization of the problem in terms of ternary coefficient sums and a consequence of an OpenAI construction that provides positive integer coefficients whose linear form is injective on large integer boxes. A base-three expansion then gives the result.
\end{abstract}

\maketitle

\section{Introduction}

For a finite set
\[
 A=\{a_1,\ldots,a_n\}\subset\Z_{>0},
\]
write
\[
 H(A)=\left\{\sum_{i=1}^n \delta_i a_i:\delta_i\in\{0,1\}\right\}.
\]
Here and below, a nonconstant three-term arithmetic progression is a triple $x,y,z$ satisfying $x+z=2y$ with $x,y,z$ not all equal. Following Erd\H{o}s and S\'ark\H{o}zy~\cite{ErdosSarkozy1992}, let $g_3(n)$ be the least positive integer $N$ for which there is an $n$-element set $A\subseteq\{1,\ldots,N\}$ such that $H(A)$ contains no nonconstant three-term arithmetic progression.

Erd\H{o}s and S\'ark\H{o}zy proved
\[
 g_3(n)\gg \frac{3^n}{n^C}
\]
for some absolute constant $C>0$, and asked whether
\[
 g_3(n)\gg 3^n.
\]
The powers-of-three construction $A=\{1,3,\ldots,3^{n-1}\}$ gives the elementary upper bound $g_3(n)\le 3^{n-1}$. Indeed, write the base-three digits of three subset sums $x,y,z$ as $u_i,v_i,w_i\in\{0,1\}$. If $x+z=2y$, then there are no carries, so $u_i+w_i=2v_i$ for every $i$. Hence $u_i=v_i=w_i$ for every $i$, and therefore $x=y=z$.

In the terminology introduced by Bae~\cite{Bae2002}, a set $A$ is \emph{$2$-fold subset-sum-distinct} if all sums
\[
 \sum_{a\in A}\varepsilon_a a,\qquad \varepsilon_a\in\{0,1,2\},
\]
are distinct. Korsky~\cite[Proposition~4.1 and Corollary~4.2]{Korsky2026} observed that, for sets of positive integers, this condition is exactly equivalent to $H(A)$ being free of nonconstant three-term arithmetic progressions. Dutta's general estimate for bounded-coefficient dissociated sets~\cite{Dutta2026}, specialized to this case, gives
\[
 g_3(n)\ge
 \left(\frac{\sqrt3}{4\sqrt\pi}+o(1)\right)\frac{3^n}{\sqrt n}.
\]
Korsky sharpened this to the exact finite lower bound
\[
 g_3(n)\ge \frac{T_n-1}{2}+\sum_{j=0}^{n-1}T_j,
 \qquad T_j=[x^j](1+x+x^2)^j,
\]
and hence
\[
 g_3(n)\ge
 \left(\frac{\sqrt3}{2\sqrt\pi}+o(1)\right)\frac{3^n}{\sqrt n}.
\]
The question whether a positive constant multiple of $3^n$ is always necessary remains open in that work and is also recorded as Erd\H{o}s Problem~\#817~\cite{BloomErdos817}.

A separate recent development is an AI-generated proof of Erd\H{o}s Problem~\#1 produced by an OpenAI research system; an accompanying mathematical exposition is available from the Erd\H{o}s Problem~\#1 entry~\cite{BloomErdos1,OpenAI2026}. Korsky's characterization, the OpenAI construction just mentioned, and a base-three expansion give a negative answer to the Erd\H{o}s--S\'ark\H{o}zy question.

\begin{theorem}\label{thm:main}
For every $\ee>0$ there is an integer $d\ge2$ such that
\[
 g_3(d\ell)\le \ee\,3^{d\ell}
\]
for all sufficiently large integers $\ell$.
\end{theorem}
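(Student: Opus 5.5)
By Korsky's characterization, it suffices to construct, for every $\ee>0$, an integer $d\ge2$ such that for all large $\ell$ there is a set $A$ of $d\ell$ positive integers, all at most $\ee 3^{d\ell}$, whose sums $\sum_{a\in A}\varepsilon_a a$ with $\varepsilon_a\in\{0,1,2\}$ are pairwise distinct. The plan is a block construction. First take a finite \emph{gadget}: positive integers $c_1,\dots,c_d$ with $\max_i c_i\le \ee' 3^d$ such that the linear form $(\varepsilon_1,\dots,\varepsilon_d)\mapsto\sum_i \varepsilon_i c_i$ is injective on a suitable integer box. Then glue $\ell$ scaled copies of the gadget together by a base-three type expansion.

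The gadget is what I expect to take from the OpenAI construction. Restricting the form to $\{0,1,2\}^d$ would only give distinct values in $[0,2\sum c_i]$, so the image is forced to have size at least $3^d$. For the whole form to be small, the $c_i$ cannot then be mere scalings. The right requirement is injectivity on a larger box, of the form $\{-K,\dots,K\}^d$ or of differences of such vectors. This is what allows carries between consecutive blocks to be absorbed.

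Concretely, I would use a modulus $M$ slightly larger than $2\sum_i c_i$, chosen so that the sums $\sum_i\varepsilon_ic_i$ with $\varepsilon\in\{0,1,2\}^d$ occupy residues injectively. I would then set
\[
A=\{c_iM^{j}: 1\le i\le d,\ 0\le j<\ell\}.
\]
Suppose two $\{0,1,2\}$-combinations coincide. Writing the difference as $\sum_j D_j M^j=0$ with $D_j=\sum_i(\varepsilon_{ij}-\varepsilon'_{ij})c_i$ and $|D_j|<M$, reduction mod $M$ block by block gives $D_j=0$ for every $j$. Injectivity of the gadget on differences then yields $\varepsilon=\varepsilon'$.

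The size is $\max A\approx \max_i c_i\cdot M^{\ell-1}$. To get this below $\ee 3^{d\ell}$ we need $M\le 3^d(1+o_d(1))$ in a strong sense, namely $M^{\ell}\ll 3^{d\ell}$ uniformly in $\ell$. That forces $M\le 3^d$ exactly, so the plain base-$M$ gluing is too lossy. The fix I would try is to let the gadget be injective on differences of vectors in the box $\{0,1,2\}^d$ enlarged by one unit of carry, that is, $\sum_i\delta_ic_i$ with $\delta\in\{-2,\dots,2\}^d$ plus a carry term. Then take $M=3^d$ exactly and use the condition that $\sum_i c_i\le \ee 3^d$ is small. Carries between blocks then lie in $\{-1,0,1\}$, and the injectivity statement for the enlarged box handles them. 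This gives $\max A\le \ee 3^d\cdot 3^{d(\ell-1)}=\ee 3^{d\ell}$.

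The main obstacle is exactly this carry analysis. One must phrase the gadget property so that (a) the OpenAI result supplies it with $\sum_i c_i=o(3^d)$, which is plausible since that construction gives coefficients whose form is injective on large boxes while the coefficients are much smaller than the box volume, and (b) an induction from the lowest block upward shows all carries vanish. I would first try to make the induction work with carries in $\{-1,0,1\}$ and then adjust the box size in the gadget hypothesis to match. Finally, for large $\ell$ the finitely many small exceptional values are irrelevant, and taking $d\ge2$ from the gadget completes the argument.
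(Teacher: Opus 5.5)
There is a genuine gap, located exactly at the step you call the main obstacle, and it cannot be closed along this route.

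First, the condition you end with, $\sum_i c_i\le\ee 3^d$, contradicts your own pigeonhole remark. The same goes for requirement (a), $\sum_i c_i=o(3^d)$. Indeed, $3^d$ distinct values in $[0,2\sum_i c_i]$ force $\sum_i c_i\ge(3^d-1)/2$.

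Read it charitably as $\max_i c_i\le\ee 3^d$ with $M=3^d$ (which the size count does force). Even so, the carry induction fails at its first step. Killing the carry out of the lowest block using that block alone requires that every $\sum_i\delta_i c_i$ with $\delta\in\{-2,\dots,2\}^d$ that is divisible by $3^d$ be zero. That is, the $3^d$ gadget values must form a complete residue system modulo $3^d$. Then $\prod_i(1+\zeta^{c_i}+\zeta^{2c_i})=0$ for every $\zeta\ne1$ with $\zeta^{3^d}=1$. Let $v_3$ denote the $3$-adic valuation. The factor $1+\zeta^{c}+\zeta^{2c}$ vanishes at a primitive $3^k$-th root of unity only when $v_3(c)=k-1$. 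So each $k=1,\dots,d$ needs its own $c_i$, and the valuations $v_3(c_i)$ are exactly $0,1,\dots,d-1$. Hence some $c_i\ge3^{d-1}$ and $\max A\ge 3^{d\ell-1}$, which is no better than $\{1,3,\dots,3^{d\ell-1}\}$.

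Genuine carries do not rescue this. Injectivity for every $\ell$ says the value set is a tile digit set for base $3^d$. Kenyon's Fourier criterion for self-similar tiles then forces the $v_3(c_i)$, after dividing out $\gcd_i c_i$, to be pairwise distinct modulo $d$, with the same conclusion.

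Nor does Proposition~\ref{prop:openai} supply a fixed gadget. It gives $n+1$ coefficients $a_i(t)\sim Dt^n$ that are injective on boxes $\{0,\dots,Q-1\}^{n+1}$ whose side $Q\approx Rt$ grows with $t$.

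The missing idea is to transpose the construction: keep the number of coefficients $d=n+1$ fixed and let the box side and the coefficients grow with $\ell$. With $t_\ell=E+\lceil 3^\ell/R\rceil$, the form $\sum_i a_i(t_\ell)x_i$ is injective on $\{0,\dots,3^\ell-1\}^{n+1}$. One then takes $A_\ell=\{3^j a_i(t_\ell):0\le i\le n,\ 0\le j<\ell\}$. A $\{0,1,2\}$-combination of $A_\ell$ equals $\sum_i a_i(t_\ell)x_i$ with $x_i=\sum_j\delta_{i,j}3^j\in[0,3^\ell-1]$. Box injectivity recovers the $x_i$, and uniqueness of ternary expansion recovers the $\delta_{i,j}$. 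No carries ever occur, because the base-three expansion runs inside each coordinate rather than across blocks. The saving comes from $\max A_\ell\le 3^{\ell-1}\max_i a_i(t_\ell)=(D/(3R^n)+o(1))\,3^{(n+1)\ell}$ together with $KD<R^n$.
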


\begin{corollary}\label{cor:liminf}
One has
\[
 \liminf_{n\to\infty}\frac{g_3(n)}{3^n}=0.
\]
In particular, there do not exist constants $c>0$ and $n_0$ such that $g_3(n)\ge c3^n$ for every $n\ge n_0$.
\end{corollary}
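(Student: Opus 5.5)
The plan is to derive Corollary~\ref{cor:liminf} directly from Theorem~\ref{thm:main}, using only the trivial lower bound $g_3(n)\ge 1>0$. Since the ratios $g_3(n)/3^n$ are positive, $\liminf_{n\to\infty} g_3(n)/3^n\ge 0$. It therefore suffices to show that this $\liminf$ is at most $\ee$ for every $\ee>0$.

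Fix $\ee>0$, and let $d\ge2$ be the integer given by Theorem~\ref{thm:main}, together with a threshold $\ell_0$ such that $g_3(d\ell)\le \ee\,3^{d\ell}$ for all $\ell\ge\ell_0$. The integers $n_\ell=d\ell$ with $\ell\ge \ell_0$ form an infinite sequence tending to infinity. Along this sequence $g_3(n_\ell)/3^{n_\ell}\le\ee$, so
\[
 \liminf_{n\to\infty}\frac{g_3(n)}{3^n}\le \liminf_{\ell\to\infty}\frac{g_3(n_\ell)}{3^{n_\ell}}\le \ee .
\]
Letting $\ee\to0$ gives the stated equality.

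For the final assertion, suppose for contradiction that there were $c>0$ and $n_0$ with $g_3(n)\ge c\,3^n$ for every $n\ge n_0$. Apply Theorem~\ref{thm:main} with $\ee=c/2$ to obtain $d$ and $\ell_0$ as above. Now choose $\ell\ge\ell_0$ with $d\ell\ge n_0$. This yields
\[
 c\,3^{d\ell}\le g_3(d\ell)\le \tfrac c2\,3^{d\ell},
\]
which is impossible because $c>0$.

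There is no real obstacle here. All the substance lies in Theorem~\ref{thm:main}, and the corollary only requires passing to the subsequence of multiples of $d$.
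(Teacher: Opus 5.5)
Your proof is correct and follows the paper's argument: Theorem~\ref{thm:main} gives $g_3(m)/3^m\le\ee$ along the infinite sequence $m=d\ell$, and nonnegativity of the ratio forces the lower limit to be zero. Your explicit contradiction argument for the ``in particular'' clause (taking $\ee=c/2$) correctly spells out a step the paper leaves implicit.
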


\section{Previous results}

We use two results from earlier work. The first is Korsky's characterization of the three-term problem in terms of ternary coefficient sums~\cite[Corollary~4.2]{Korsky2026}.

\begin{proposition}[Korsky]\label{prop:korsky}
For every $m\ge1$,
\[
 g_3(m)=\min\left\{\max_{1\le r\le m}b_r:
 (b_1,\ldots,b_m)\in\Z_{>0}^m,\ 
 \varepsilon\longmapsto\sum_{r=1}^m\varepsilon_r b_r
 \text{ is injective on }\{0,1,2\}^m\right\}.
\]
\end{proposition}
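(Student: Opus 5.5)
The plan is to derive Proposition~\ref{prop:korsky} from the equivalence quoted in the introduction: for a set $A$ of positive integers, $H(A)$ is free of nonconstant three-term progressions if and only if $A$ is $2$-fold subset-sum-distinct. I will prove this equivalence directly and then translate it into the minimum over coefficient vectors. For the translation, note first that if $(b_1,\ldots,b_m)\in\Z_{>0}^m$ makes $\varepsilon\mapsto\sum_r\varepsilon_rb_r$ injective on $\{0,1,2\}^m$, then the $b_r$ are pairwise distinct. Indeed, $b_r=b_s$ with $r\ne s$ would make $\varepsilon=e_r$ and $\varepsilon=e_s$ collide. Hence such vectors are exactly orderings of $m$-element $2$-fold subset-sum-distinct sets $A\subset\Z_{>0}$, with $\max_r b_r=\max A$. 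The least $N$ admitting such an $A\subseteq\{1,\ldots,N\}$ is then the stated minimum, so the equivalence gives $g_3(m)$ as claimed.

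\emph{Injectivity implies no progression.} Let $x=\sum u_ia_i$, $y=\sum v_ia_i$, $z=\sum w_ia_i$ with $u_i,v_i,w_i\in\{0,1\}$, and suppose $x+z=2y$. Then $\sum(u_i+w_i)a_i=\sum 2v_ia_i$. Both coefficient vectors lie in $\{0,1,2\}^n$, so injectivity gives $u_i+w_i=2v_i$ for every $i$. This forces $u_i=v_i=w_i$, hence $x=y=z$.

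\emph{No progression implies injectivity.} Argue by contrapositive, and split into two cases. First suppose $A$ is not even subset-sum-distinct. Then there are disjoint nonempty $S,T\subseteq A$ with common sum $s>0$, and this is where positivity is used. Since $S\cup T$ is disjoint, $2s$ is its subset sum, so $0,s,2s\in H(A)$ is a nonconstant progression. Otherwise all $0/1$ sums are distinct, and suppose $\sum\varepsilon_ia_i=\sum\varepsilon'_ia_i$ with $\varepsilon\ne\varepsilon'$. Set $e_i=\varepsilon_i-\varepsilon'_i\in\{-2,\ldots,2\}$, not all zero. For each $i$ choose $u_i,v_i,w_i\in\{0,1\}$ with $u_i+w_i-2v_i=e_i$, using
\[
(u_i,v_i,w_i)=(0,1,0),\ (1,1,0),\ (0,0,0),\ (1,0,0),\ (1,0,1)
\]
for $e_i=-2,-1,0,1,2$ respectively. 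Then $\sum e_ia_i=0$ gives $x+z=2y$ for the corresponding subset sums. If $x=y=z$, then distinctness of $0/1$ sums forces $u=v=w$, so $e=0$, a contradiction. Hence the progression is nonconstant.

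The only delicate point is the last one: ensuring that the progression produced from a collision is genuinely nonconstant. The case split handles it. When $0/1$ sums coincide, the progression $0,s,2s$ works; when they are all distinct, $x=y=z$ would force identical digit vectors and hence $e=0$. Everything else is bookkeeping.
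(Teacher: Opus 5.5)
Your argument is correct. It also goes further than the paper: the paper gives no proof of this proposition. It imports it from Korsky, with Proposition~4.1 there for the equivalence between $2$-fold subset-sum-distinctness and progression-freeness of $H(A)$, and Corollary~4.2 for the min--max reformulation. You prove that equivalence from scratch, and the pieces line up with arguments the paper uses elsewhere.

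\begin{itemize}
\item Your forward direction is the same no-carry argument the paper gives in the introduction for $\{1,3,\ldots,3^{n-1}\}$. There, uniqueness of base-three digits plays the role of the injectivity hypothesis.
\item Your observation that injectivity forces the $b_r$ to be pairwise distinct (compare $e_r$ with $e_s$) is the same unit-vector trick the paper later uses to get $|A_\ell|=d\ell$.
\item In the reverse direction, your table writing each $e_i\in\{-2,\ldots,2\}$ as $u_i+w_i-2v_i$ with $u_i,v_i,w_i\in\{0,1\}$ checks out.
\item Splitting on whether the $0/1$ sums are distinct is exactly what nonconstancy requires. When two $0/1$ sums coincide, positivity forces both difference sets $S,T$ to be nonempty with $s>0$: if one were empty, the other would have sum $0$ and hence be empty too. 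So $0,s,2s$ is a genuine progression in $H(A)$. When the $0/1$ sums are distinct, $x=y=z$ forces $u=v=w$ and hence $e=0$.
\end{itemize}

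Your route makes the paper self-contained at this step and shows exactly where positivity of the elements enters. The citation buys brevity and nothing more.
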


The second result is a consequence of the construction in the OpenAI exposition~\cite[Sections~2--6]{OpenAI2026}. We state only the consequence needed here.

\begin{proposition}[Consequence of the OpenAI construction]\label{prop:openai}
For every $K>0$ there exist integers $n\ge1$, $R\ge1$, $D\ge1$, and $E\ge0$, together with integer coefficients
\[
 a_0(t),\ldots,a_n(t)\qquad(t\in\Z),
\]
such that:
\begin{enumerate}
\item $KD<R^n$;
\item $a_i(t)/t^n\to D$ as $t\to\infty$ for every $0\le i\le n$;
\item for all sufficiently large positive integers $t$, all $a_i(t)$ are positive;
\item if $t,Q$ are positive integers satisfying
\[
 Q\le (t-E)R,
\]
then the map
\[
 \{0,\ldots,Q-1\}^{n+1}\longrightarrow\Z,
 \qquad
 (x_0,\ldots,x_n)\longmapsto\sum_{i=0}^n a_i(t)x_i
\]
is injective.
\end{enumerate}
\end{proposition}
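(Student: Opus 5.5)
Since this proposition is explicitly a consequence of the OpenAI construction, I would not rebuild it from scratch. I would extract it from~\cite[Sections~2--6]{OpenAI2026} by matching that exposition's parameters to conditions (1)--(4). First, a heuristic check that the statement is plausible. By (2) the form $\sum_i a_i(t)x_i$ on $\{0,\ldots,Q-1\}^{n+1}$ takes values in an interval of length about $(n+1)Dt^nQ$. With $Q\approx tR$ there are about $(tR)^{n+1}$ points. So injectivity forces roughly $(n+1)D\gtrsim R^{n}$, and (1) asks only that $R^n/D$ exceed a prescribed $K$. The construction therefore only needs to be a near-optimal packing with a large but bounded ratio $R^n/D$, and the parameter $n$ can grow with $K$.

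\emph{Coefficient shape.} I would take the coefficients to be polynomials in $t$ of the form
\[
 a_i(t)=Dt^n+\sum_{j<n}c_{ij}t^j,\qquad c_{ij}\in\Z ,
\]
with the $c_{ij}$ independent of $t$. Then (2) holds automatically, and (3) holds for large $t$ because the leading term dominates. Injectivity in (4) is equivalent to the following: there is no nonzero $y\in\Z^{n+1}$ with $|y_i|<Q$ and $\sum_i y_ia_i(t)=0$.

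\emph{Reducing (4) to a finite check.} I would write this vanishing sum in base $t$. Split $y_i=\sum_k y_{ik}t^k$ with bounded digits, using $|y_i|<(t-E)R$, so each $y_i$ has only a few digits, each of size $O(R)$. Then compare coefficients of each power of $t$ successively. The role of the slack $E$ is to keep all carries bounded, so that no carry overflows into the next power of $t$. This turns the condition into a finite, $t$-independent statement about the template $(D,c_{ij},R)$: a certain finite system of linear equations with bounded carries has only the trivial solution. The remaining work is to check that the template produced in~\cite{OpenAI2026} satisfies this finite condition for every $t\ge E+1$, and to read off $n,R,D,E$ so that (1) holds.

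\emph{Main obstacle.} The hard step is the existence of a template with $R^n/D>K$ for arbitrary $K$. This is exactly the nontrivial content of the OpenAI construction, which beats the trivial ``powers of the base'' packing by an unbounded factor. It does not follow from the bookkeeping above, so there I would invoke the cited sections directly. The only checks I would do myself are that their construction has the polynomial-in-$t$ shape above and that their injectivity range is stated in the form $Q\le (t-E)R$.
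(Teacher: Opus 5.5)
Your plan matches the paper's proof, which likewise reads the proposition off from the cited construction: (1) comes from its relation $kD<R^n$ (equation~(16) of the source, with an integer $k>K$), (2) and (3) come from the asymptotic~(22), and (4) comes from its Proposition~5.2 together with the kernel description~(21) and the box-injectivity conclusion~(25), taking $E=E_B$. Your polynomial-shape and base-$t$ carry sketch is not needed, since only the limit in~(2) is used and you need not verify that the $a_i(t)$ are polynomials; the one bookkeeping point the paper makes explicit is that $Q\ge1$ and $R>0$ in $Q\le(t-E)R$ force $t>E$, which that proposition requires.
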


\begin{proof}
Given $K>0$, choose an integer $k>K$. In~\cite[equation~(16)]{OpenAI2026}, the construction gives integers $n,R,D$ with
\[
 kD<R^n,
\]
which implies (1). Equation~(22) in the same source gives (2), and since $D>0$ it also gives (3) for all sufficiently large positive $t$. Finally, Proposition~5.2 there allows any positive integer $q_0$ satisfying
\[
 q_0\le (t-E_B)R.
\]
Since $q_0>0$ and $R>0$, this inequality also implies $t>E_B$, as required in that proposition. Equation~(21) describes exactly which integer vectors make the linear form vanish. The argument following equation~(24) then shows that two vectors in $\{0,\ldots,q_0-1\}^{n+1}$ with the same value of the linear form must be equal; this is equation~(25). Taking $E=E_B$ and $Q=q_0$ gives (4).
\end{proof}

\section{Base-three expansion}

We now apply Propositions~\ref{prop:korsky} and~\ref{prop:openai}. We take $Q=3^\ell$, so each coordinate ranges from $0$ to $3^\ell-1$, and use the usual base-three representation of these integers.

\begin{proof}[Proof of Theorem~\ref{thm:main}]
Fix $\ee>0$. Choose $K>1/(3\ee)$ and apply Proposition~\ref{prop:openai}. Keep the resulting integers $n,R,D,E$ fixed, and put
\[
 d=n+1.
\]
For a positive integer $\ell$, set
\[
 Q_\ell=3^\ell,
 \qquad
 t_\ell=E+\left\lceil\frac{3^\ell}{R}\right\rceil.
\]
Then
\[
 Q_\ell=3^\ell\le(t_\ell-E)R.
\]
Since $t_\ell\to\infty$, for all sufficiently large $\ell$ the integers
$a_0(t_\ell),\ldots,a_n(t_\ell)$ are positive, and Proposition~\ref{prop:openai}(4) gives that
\begin{equation}\label{eq:box3}
 (x_0,\ldots,x_n)\longmapsto
 \sum_{i=0}^n a_i(t_\ell)x_i
\end{equation}
is injective on $\{0,\ldots,3^\ell-1\}^{n+1}$.

Define
\[
 A_\ell=
 \left\{3^j a_i(t_\ell):0\le i\le n,\ 0\le j<\ell\right\}.
\]
We claim that the map
\[
 \{0,1,2\}^{(n+1)\ell}\longrightarrow\Z,
 \qquad
 (\delta_{i,j})\longmapsto
 \sum_{i=0}^n\sum_{j=0}^{\ell-1}\delta_{i,j}3^j a_i(t_\ell)
\]
is injective. Suppose that two choices of coefficients $\delta_{i,j},\delta'_{i,j}\in\{0,1,2\}$ give the same sum, and set
\[
 x_i=\sum_{j=0}^{\ell-1}\delta_{i,j}3^j,
 \qquad
 x'_i=\sum_{j=0}^{\ell-1}\delta'_{i,j}3^j.
\]
Then $0\le x_i,x'_i\le3^\ell-1$ and
\[
 \sum_{i=0}^n a_i(t_\ell)x_i
 =\sum_{i=0}^n a_i(t_\ell)x'_i.
\]
Injectivity in~\eqref{eq:box3} gives $x_i=x'_i$ for every $i$. Uniqueness of base-three expansion then gives $\delta_{i,j}=\delta'_{i,j}$ for every $i,j$. Thus the map is injective. Taking coefficient arrays with one entry equal to $1$ and all the others equal to $0$ shows in particular that the $(n+1)\ell$ integers $3^j a_i(t_\ell)$ are pairwise distinct, so
\[
 |A_\ell|=(n+1)\ell=d\ell.
\]
Proposition~\ref{prop:korsky} therefore gives
\begin{equation}\label{eq:g3bound}
 g_3(d\ell)\le \max A_\ell.
\end{equation}

It remains to estimate the largest element. Since the number of coefficients is fixed and
$a_i(t)/t^n\to D$ for every $i$,
\[
 \max_{0\le i\le n}a_i(t_\ell)=(D+o(1))t_\ell^n
 \qquad(\ell\to\infty).
\]
Moreover
\[
 t_\ell=\frac{3^\ell}{R}+O(1),
 \qquad
 t_\ell^n=\frac{3^{n\ell}}{R^n}(1+o(1)).
\]
Hence
\begin{align*}
 \max A_\ell
 &\le 3^{\ell-1}\max_i a_i(t_\ell)\\
 &=\left(\frac{D}{3R^n}+o(1)\right)3^{(n+1)\ell}.
\end{align*}
By Proposition~\ref{prop:openai}(1),
\[
 \frac{D}{3R^n}<\frac1{3K}<\ee.
\]
Combining this with~\eqref{eq:g3bound} yields, for all sufficiently large $\ell$,
\[
 g_3(d\ell)\le\ee\,3^{d\ell}.
\]
\end{proof}

\begin{proof}[Proof of Corollary~\ref{cor:liminf}]
Given $\ee>0$, Theorem~\ref{thm:main} provides infinitely many integers $m=d\ell$ for which
$g_3(m)/3^m\le\ee$. Since $\ee$ is arbitrary and the ratio is nonnegative, its lower limit is zero.
\end{proof}

\section*{Acknowledgements}
The author used ChatGPT (OpenAI, GPT-5.6 Sol, September 2026) to identify and check the base-three application of Proposition~\ref{prop:openai} to the Erd\H{o}s--S\'ark\H{o}zy question, to check calculations and references, and to assist in revising the manuscript. This use is separate from the OpenAI-generated construction cited in~\cite{OpenAI2026}. The author checked the mathematical statements and references and takes full responsibility for the paper.

\end{document}